\definecolor{webgreen}{rgb}{0,.5,0}
\def\pont{$\bullet$ }
\newtheorem{theorem}{Theorem}
\newtheorem{lemma}[theorem]{Lemma}
\newtheorem{prop}[theorem]{Proposition}
\newtheorem{remark}[theorem]{Remark}
\begin{document}

\title{{\bf Some properties of a sequence defined with the aid of prime numbers}}
\author{Br\u{a}du\c{t} Apostol, \fbox{Lauren\c{t}iu Panaitopol}, Lucian Petrescu, and L\'aszl\'o T\'oth}
\date{}
\maketitle

\centerline{Journal of Integer Sequences {\bf 18} (2015), Article 15.5.5}

\begin{abstract} For every integer $n\ge 1$ let $a_n$ be the smallest positive integer such that $n+a_n$ is prime.
We investigate the behavior of the sequence $(a_n)_{n\ge 1}$, and
prove asymptotic results for the sums $\sum_{n\le x} a_n$,
$\sum_{n\le x} 1/a_n$ and $\sum_{n\le x} \log a_n$.
\end{abstract}

\medskip
{\sl 2010 Mathematics Subject Classification}: 11A41, 11N05

{\it Key Words and Phrases}: prime numbers, difference of
consecutive primes, asymptotic behavior

\section{Introduction} For every integer $n\ge 1$ let $a_n$
be the smallest positive integer such that $n+a_n$ is prime. Here
$a_1=1$, $a_2=1$, $a_3=2$, $a_4=1$, $a_5=2$, $a_6=1$, $a_7=4$, etc.
This is sequence A013632 in Sloane's Online Encyclopedia of Integer
Sequences \cite{OEIS}. For $n\ge 2$, $a_n$ is the smallest positive
integer such that $\gcd(n!,n+a_n)=1$. In this paper we study the
behavior of the sequence $(a_n)_{n\ge 1}$, and prove asymptotic
results for the sums $\sum_{n\le x} a_n$, $\sum_{n\le x} 1/a_n$ and
$\sum_{n\le x} \log a_n$.

We are going to use the following standard notation:

\pont $\pi(x)$ is the number of primes $\le x$,

\pont $\pi_2(x)$ is the number of twin primes $p,p+2$ such that
$p\le x$,

\pont $p_n$ is the  $n$-th prime,

\pont $d_n=p_{n+1}-p_n$,

\pont $f(x) \ll g(x)$ means that $|f(x)|\le C g(x)$, where $C$ is an absolute constant,

\pont $g(x) \gg f(x)$ means that $f(x) \ll g(x)$,

\pont $f(x) = F(x)+ O(g(x))$ means that $f(x) - F(x) \ll g(x)$,

\pont $f(x) \asymp g(x)$ means that $c f(x) \le g(x) \le Cf(x)$ for some positive absolute constants $c$ and $C$,

\pont $f(x) \sim g(x)$ means that $\lim_{x\to \infty} f(x)/g(x)=1$.

We will apply the following known asymptotic results concerning the
distribution of the primes:
\begin{equation*}
\pi(x) \sim \frac{x}{\log x}, \quad p_n\sim n\log n \quad {\text{\rm (Prime number theorem)}},
\end{equation*}
\begin{equation} \label{H}
\sum_{p_n\le x} d_n^2\ll x^{23/18+\varepsilon} \quad {\text{\rm for
every $\varepsilon >0$ (unconditional result of Heath-Brown
\cite{H})}},
\end{equation}
\begin{equation} \label{S}
\sum_{p_n\le x} d_n^2\ll x (\log x)^3 \quad  {\text{\rm (assuming
the Riemann hypothesis, result of Selberg \cite{S})}},
\end{equation}
\begin{equation} \label{P}
\left(\frac{d_2d_3\cdots d_n}{(\log 2)(\log 3)\cdots (\log
n)}\right)^{1/n}\asymp 1 \quad {\text{\rm (due to Panaitopol
\cite[Prop.\ 3]{P})}}.
\end{equation}

This research was initiated by Lauren\c{t}iu Panaitopol
(1940--2008), former professor at the Faculty of Mathematics,
University of Bucharest, Romania. The present paper is dedicated to his
memory.

\section{Equations and identities}

By the definition of $a_n$, for every $n\ge 1$ we have
$n+a_n=p_{\pi(n)+1}$, that is
\begin{equation} \label{exp_form}
a_n= p_{\pi(n)+1}-n.
\end{equation}

From \eqref{exp_form} we deduce that for every $k\ge 1$,
\begin{equation} \label{values}
a_{p_k}= p_{k+1}-p_k, a_{p_k+1}= p_{k+1}-p_k-1,\ldots, a_{p_{k+1}-1}=1.
\end{equation}

\begin{prop} For every integer $a\ge 1$  the equation $a_n=a$ has infinitely many solutions.
\end{prop}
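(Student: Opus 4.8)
The plan is to use the explicit formula \eqref{exp_form} together with the chain of identities \eqref{values}. First I would observe that, by \eqref{exp_form}, the equation $a_n=a$ is equivalent to $n+a=p_{\pi(n)+1}$, i.e.\ to the statement that $n+a$ is prime while none of $n+1,\dots,n+a-1$ is prime; equivalently, $n+a$ is a prime whose distance to the preceding prime is at least $a$. Concretely, reading off \eqref{values}: if $k$ is an index with $d_k=p_{k+1}-p_k\ge a$, then $n:=p_{k+1}-a=p_k+(d_k-a)$ lies in the range $p_k\le n\le p_{k+1}-1$ covered by \eqref{values}, and the corresponding value listed there is $a_n=p_{k+1}-p_k-(d_k-a)=a$. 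So every prime gap of size $\ge a$ produces a solution of $a_n=a$.

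The remaining point is that there are infinitely many such indices $k$, i.e.\ that $d_k\ge a$ holds infinitely often. This is the only genuine input, and it is classical: prime gaps are unbounded. One clean way is to note that for any $m\ge 2$ the $m-1$ consecutive integers $m!+2,\,m!+3,\,\dots,\,m!+m$ are all composite (the one equal to $m!+j$ being divisible by $j$), so there is a gap between consecutive primes of length $\ge m$ located beyond $m!+1$; letting $m\to\infty$ forces $d_k\ge a$ for infinitely many $k$. Alternatively, if $d_k\le a-1$ held for all $k\ge k_0$, then $p_k\le p_{k_0}+(a-1)(k-k_0)$ would give $p_k=O(k)$, contradicting $p_k\sim k\log k$ from the prime number theorem quoted above.

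Finally I would assemble the pieces: for each of the infinitely many $k$ with $d_k\ge a$, the integer $n_k:=p_{k+1}-a$ satisfies $a_{n_k}=a$, and these $n_k$ are pairwise distinct because $k\mapsto p_{k+1}$ is strictly increasing; one should also note $n_k\ge 1$, which holds as soon as $p_{k+1}>a$, hence for all large $k$. Therefore $a_n=a$ has infinitely many solutions. The case $a=1$ is even more transparent, since $d_k\ge 1$ for every $k$ and $a_{p_{k+1}-1}=1$ directly from \eqref{values}. I do not anticipate any real obstacle here; the one thing to be careful about is verifying that the constructed $n$ gives $a_n$ exactly equal to $a$ rather than to something smaller, and this is precisely what the identities \eqref{values} guarantee.
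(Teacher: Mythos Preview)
Your argument is correct and follows essentially the same route as the paper: use \eqref{values} to see that every $k$ with $d_k\ge a$ yields a solution $n=p_{k+1}-a$, and then invoke the unboundedness of prime gaps to get infinitely many such $k$. The paper simply asserts $\limsup_{k\to\infty}(p_{k+1}-p_k)=\infty$ without justification, whereas you supply two proofs of this fact; otherwise the arguments coincide.
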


\begin{proof} Let $A_k=\{1,2,\ldots, p_{k+1}-p_k\}$. Since $\limsup_{k\to \infty}
(p_{k+1}-p_k)=\infty$, it follows from \eqref{values} that for every
integer $a\ge 1$ there exist infinitely many integers $k\ge 1$ such
that $a\in A_k$, whence the equation $a_n=a$ has infinitely many
solutions.
\end{proof}

Now we compute the sum $\displaystyle S_n=\sum_{i=1}^n a_i$.

\begin{prop} For every prime $n\ge 3$ we have
\begin{equation} \label{sum_prime}
S_n= \frac1{2}\left(2p_{\pi(n)+1}-p_{\pi(n)}+\sum_{k=1}^{\pi(n)-1} d_k^2 \right),
\end{equation}
and for every composite number $n\ge 4$,
\begin{equation} \label{sum_composite}
S_n= \frac1{2}\left(p_{\pi(n)}^2+ 2(n+1-p_{\pi(n)}) p_{\pi(n)+1}
+\sum_{k=1}^{\pi(n)-1} d_k^2 -n^2-n \right).
\end{equation}
\end{prop}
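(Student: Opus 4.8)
The plan is to sum the $a_i$ block by block, where a \emph{block} is a maximal run of consecutive integers $i$ on which $\pi(i)$ is constant. Fix $n$ and put $m=\pi(n)$, so that $p_m\le n<p_{m+1}$. On a full block $p_k\le i\le p_{k+1}-1$ (of length $d_k$), equation \eqref{values} says that the values $a_i$ run through $d_k, d_k-1,\ldots,2,1$, hence sum to $\tfrac12 d_k(d_k+1)$. Separating off the term $a_1=1$ and the possibly incomplete last block $p_m\le i\le n$, on which $a_i=p_{m+1}-i$ by \eqref{exp_form}, I get
$$S_n=1+\sum_{k=1}^{m-1}\frac{d_k(d_k+1)}{2}+\sum_{i=p_m}^{n}(p_{m+1}-i).$$

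The next step is elementary algebra. In the middle sum, $\sum_{k=1}^{m-1}d_k=p_m-p_1=p_m-2$ telescopes, so $\sum_{k=1}^{m-1}\tfrac12 d_k(d_k+1)=\tfrac12\sum_{k=1}^{m-1}d_k^2+\tfrac12(p_m-2)$. The last sum equals $(n-p_m+1)p_{m+1}-\tfrac12(n+p_m)(n-p_m+1)$. Substituting these in, the constant $1$ combines with $\tfrac12(p_m-2)$ to give $\tfrac{p_m}{2}$, and using $\tfrac{p_m}{2}-\tfrac12(n+p_m)(n-p_m+1)=\tfrac12(p_m^2-n^2-n)$ one arrives at
$$S_n=\frac12\left(p_m^2+\sum_{k=1}^{m-1}d_k^2+2(n-p_m+1)p_{m+1}-n^2-n\right),$$
which is exactly \eqref{sum_composite} with $m=\pi(n)$. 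Note that this identity in fact holds for every integer $n\ge 2$, composite or not.

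To finish, I would specialize the displayed identity to a prime $n\ge 3$. Then $p_{\pi(n)}=p_m=n$, so $n-p_m+1=1$ and the terms $p_m^2=n^2$ and $-n^2$ cancel, leaving $S_n=\tfrac12\bigl(2p_{\pi(n)+1}-p_{\pi(n)}+\sum_{k=1}^{\pi(n)-1}d_k^2\bigr)$, which is \eqref{sum_prime}. I expect the only point requiring real care to be the bookkeeping at the two boundaries — the isolated term $a_1=1$ at the bottom and the incomplete last block at the top — together with a quick check of the base cases $n=3$ and $n=4$; the telescoping and the final cancellation are then routine.
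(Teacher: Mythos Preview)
Your proof is correct and uses the same block-by-block decomposition as the paper. The only organizational difference is that you establish the general identity \eqref{sum_composite} first and then specialize to primes, whereas the paper proves the prime case \eqref{sum_prime} directly and then extends it to composite $n$; the paper itself notes (in the remark following the proposition) that \eqref{sum_composite} subsumes \eqref{sum_prime}, so your order is arguably the more economical one.
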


\begin{proof} If $n\ge 3$ is a prime, then $n=p_m$ for some $m\ge 2$. By using
\eqref{exp_form},
\begin{align*}
S_n & = \sum_{i=1}^n \left(p_{\pi(i)+1}-i\right) \\
& = 2+3+(5+5)+\cdots+(p_m-p_{m-1})p_m+p_{m+1}-\frac{n(n+1)}{2} \\
& = 2+ \sum_{k=2}^m p_k(p_k-p_{k-1})+p_{m+1}- \frac{n(n+1)}{2}\\
& = \frac1{2}\left(p_1^2+ 2 \sum_{k=2}^m p_k^2 - 2\sum_{k=2}^m p_k
p_{k-1} +2p_{m+1}- n^2-n\right)\\
& = \frac1{2}\left(2p_{m+1}-n+ \sum_{k=1}^{m-1} (p_{k+1}-p_k)^2
\right)
\end{align*}
and \eqref{sum_prime} follows by using that $m=\pi(n)$.

Now let $t\ge 4$ be composite. Let $m\ge 2$ be such that
$p_m<t<p_{m+1}$. By applying \eqref{sum_prime} for $n=p_m$, where
$m=\pi(n)=\pi(t)$, we deduce
\begin{align*}
S_t & = S_n+ \sum_{i=n+1}^t a_i = S_n + \sum_{i=n+1}^t \left(p_{\pi(i)+1}-i \right)\\
& = \frac1{2}\left(2p_{\pi(t)+1}-p_{\pi(t)}+\sum_{k=1}^{\pi(t)-1}
(p_{k+1}-p_k)^2 \right)+ \frac{(2p_{\pi(t)+1}-n-t-1)(t-n)}{2}\\
& = \frac1{2}\left(2p_{\pi(t)+1}-p_{\pi(t)}+\sum_{k=1}^{\pi(t)-1}
(p_{k+1}-p_k)^2 + 2p_{\pi(t)+1}(t-n)-t^2-t+n^2+n\right)\\
& = \frac1{2}\left(p_{\pi(t)}^2+ 2(t+1-p_{\pi(t)}) p_{\pi(t)+1}
+\sum_{k=1}^{\pi(t)-1} (p_{k+1}-p_k)^2-t^2-t \right)
\end{align*}
and \eqref{sum_composite} is proved.
\end{proof}

\begin{remark} If $n$ is prime, then \eqref{sum_composite} reduces to \eqref{sum_prime}. Therefore,
the identity \eqref{sum_composite} holds for every integer $n\ge 3$.
\end{remark}

Next we compute the product $\displaystyle P_n=\prod_{i=1}^n a_i$.

\begin{prop} For every prime $n\ge 3$ we have
\begin{equation} \label{prod_prime}
P_{n-1} = \prod_{k=1}^{\pi(n)-1} d_k!,
\end{equation}
and for every composite number $n\ge 4$,
\begin{equation}
\label{prod_composite} P_{n-1} = \prod_{k=1}^{\pi(n)-1} d_k!
\prod_{k=1}^{n-p_{\pi(n)}} (p_{\pi(n)+1}-p_{\pi(n)}-k+1).
\end{equation}
\end{prop}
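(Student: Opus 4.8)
The plan is to exploit the block structure of the sequence $(a_i)$ recorded in \eqref{values}. For each $k\ge 1$ let $B_k=\{p_k,p_k+1,\ldots,p_{k+1}-1\}$ be the block of consecutive integers lying between two successive primes; it has exactly $d_k=p_{k+1}-p_k$ elements. By \eqref{values} the values $a_i$ for $i\in B_k$ are, in order, $d_k,\,d_k-1,\,\ldots,\,2,\,1$, so that
\[
\prod_{i\in B_k} a_i = d_k(d_k-1)\cdots 2\cdot 1 = d_k!.
\]

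First I would prove \eqref{prod_prime}. If $n\ge 3$ is prime, then $n=p_m$ with $m=\pi(n)$, and the set $\{1,2,\ldots,n-1\}$ is the disjoint union $\{1\}\cup B_1\cup B_2\cup\cdots\cup B_{m-1}$ (here $B_1=\{2\}$, $B_2=\{3,4\}$, and so on, while $a_1=1$ contributes a trivial factor). Multiplying the per-block products gives $P_{n-1}=\prod_{k=1}^{m-1} d_k!$, which is \eqref{prod_prime}. For composite $n\ge 4$ I would argue exactly as in the derivation of \eqref{sum_composite}: writing $p_m<n<p_{m+1}$ with $m=\pi(n)$, the set $\{1,2,\ldots,n-1\}$ is $\{1\}\cup B_1\cup\cdots\cup B_{m-1}\cup\{p_m,p_m+1,\ldots,n-1\}$, where the last piece is an initial segment of $B_m$ of length $n-p_m$. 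By \eqref{values} the product of $a_i$ over this initial segment equals $d_m(d_m-1)\cdots(p_{m+1}-n+1)$, and reindexing the factors by $k=1,\ldots,n-p_m$ rewrites the generic factor as $p_{m+1}-p_m-k+1$. Combining this with $P_{p_m-1}=\prod_{k=1}^{m-1} d_k!$ from the prime case yields \eqref{prod_composite}. (As a sanity check, when $n$ is itself prime the terminal product is empty since $n-p_{\pi(n)}=0$, so \eqref{prod_composite} collapses to \eqref{prod_prime}; thus \eqref{prod_composite} in fact holds for every $n\ge 3$, paralleling the remark after \eqref{sum_composite}.)

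There is no real obstacle here; the only point requiring attention is the bookkeeping of index ranges — verifying that the first block is genuinely $B_1=\{2\}$ rather than starting at $1$, that the trivial factor $a_1=1$ is correctly absorbed, and that the reindexing $i\mapsto k$ on the terminal partial block produces precisely the factor $p_{\pi(n)+1}-p_{\pi(n)}-k+1$ with $k$ ranging from $1$ to $n-p_{\pi(n)}$.
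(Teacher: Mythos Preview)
Your proof is correct and follows essentially the same route as the paper: both arguments use \eqref{values} to recognise that the product of the $a_i$ over each complete block $\{p_k,\ldots,p_{k+1}-1\}$ equals $d_k!$, establish \eqref{prod_prime} for primes first, and then obtain \eqref{prod_composite} by appending the contribution of the initial segment of the last block. Your explicit block decomposition and the sanity check at the end are nice touches, but the underlying argument is the same as the paper's.
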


\begin{proof} Let $n=p_m\ge 3$ be a prime. By using
\eqref{values},
\begin{align*}
P_{n-1} & = \prod_{i=2}^m (p_i-p_{i-1})!= \prod_{i=1}^{m-1} (p_{i+1}-p_i)!,
\end{align*}
which proves \eqref{prod_prime}.

Now let $t\ge 4$ be composite such that
$p_m<t<p_{m+1}$. By applying \eqref{prod_prime} for $n=p_m$, where
$m=\pi(n)=\pi(t)$, we deduce
\begin{align*}
P_{t-1} & = P_{n-1} \prod_{i=n}^{t-1} a_i = P_{n-1} \prod_{i=n}^{t-1} \left(p_{\pi(i)+1}-i \right)\\
& = \prod_{k=1}^{\pi(t)-1} d_k! \prod_{j=1}^{t-p_m} \left(p_{m+1}-p_m-j+1 \right) \\
& = \prod_{k=1}^{\pi(t)-1} d_k! \prod_{k=1}^{t-p_{\pi(t)}} \left(p_{\pi(t)+1}-p_{\pi(t)}-k+1 \right)
\end{align*}
and \eqref{prod_composite} is proved.
\end{proof}

\begin{remark} If $n$ is prime, then the second product in \eqref{prod_composite} is empty and
\eqref{prod_composite} reduces to
\eqref{prod_prime}. Hence the identity \eqref{prod_composite} holds for every integer $n\ge 3$.
\end{remark}

\section{Asymptotic results}

\begin{theorem} For every $\varepsilon >0$,
\begin{equation} \label{asymp_sum}
x\log x \ll \sum_{n\le x} a_n \ll x^{23/18+\varepsilon},
\end{equation}
where $23/18\doteq 1.277$. If the Riemann hypothesis is true, then the
upper bound in \eqref{asymp_sum} is $x(\log x)^3$.
\end{theorem}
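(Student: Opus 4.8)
The plan is to read off $\sum_{n\le x}a_n=S_{\lfloor x\rfloor}$ from the identity \eqref{sum_composite} and reduce it to the sum of squares of prime gaps $\sum_{p_k\le x}d_k^2$, for which \eqref{H} and \eqref{S} are tailor-made. Put $n=\lfloor x\rfloor$ and $m=\pi(x)=\pi(n)$ (so $n\ge 3$ once $x\ge 3$), and set $r=n-p_m$, so that $0\le r\le d_m-1$ since $p_m\le n<p_{m+1}=p_m+d_m$. Substituting $p_{\pi(n)+1}=p_m+d_m$ and $n=p_m+r$ into \eqref{sum_composite} and expanding, the quadratic terms cancel and one is left with
\begin{equation*}
2S_n=n+\sum_{k=1}^{m-1}d_k^2+2(d_m-r)+r(2d_m-r).
\end{equation*}
The last two terms are nonnegative and $O(d_m^2)$ (because $0\le r\le d_m-1$), and $d_m^2=d_{\pi(x)}^2\le\sum_{p_k\le x}d_k^2$ since $p_m=p_{\pi(x)}\le x$. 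Hence
\begin{equation*}
\sum_{k=1}^{\pi(x)-1}d_k^2\ \ll\ \sum_{n\le x}a_n\ \ll\ x+\sum_{p_k\le x}d_k^2 .
\end{equation*}

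The upper bound in \eqref{asymp_sum} now follows by feeding \eqref{H} into the right-hand inequality: $\sum_{p_k\le x}d_k^2\ll x^{23/18+\varepsilon}$ and $x\ll x^{23/18+\varepsilon}$, so $\sum_{n\le x}a_n\ll x^{23/18+\varepsilon}$. Under the Riemann hypothesis one uses \eqref{S} in place of \eqref{H} and obtains $\sum_{n\le x}a_n\ll x(\log x)^3$ in exactly the same way.

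For the lower bound, use only the left-hand inequality together with Cauchy--Schwarz:
\begin{equation*}
\sum_{k=1}^{\pi(x)-1}d_k^2\ \ge\ \frac{1}{\pi(x)-1}\Bigl(\sum_{k=1}^{\pi(x)-1}d_k\Bigr)^{2}=\frac{(p_{\pi(x)}-2)^2}{\pi(x)-1},
\end{equation*}
the last equality being the telescoping identity $\sum_{k=1}^{\pi(x)-1}d_k=p_{\pi(x)}-p_1$. By the prime number theorem $p_{\pi(x)}\sim x$ and $\pi(x)\sim x/\log x$, so the right-hand side is $\gg x^2/(x/\log x)=x\log x$, whence $\sum_{n\le x}a_n\gg x\log x$. (Bertrand's postulate, giving $p_{\pi(x)}\gg x$, together with Chebyshev's estimate $\pi(x)\ll x/\log x$, would already suffice here.)

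The only genuine difficulty is that the upper bound is not really ours: it is precisely as strong as the best available estimate for $\sum_{p_k\le x}d_k^2$, and so rests entirely on the deep results \eqref{H} and \eqref{S}, which we merely quote. Everything on our side is elementary — the cancellation producing the displayed identity, the trivial domination of the boundary term $d_{\pi(x)}^2$ by the full sum, and the Cauchy--Schwarz step for the lower bound — the one point to watch being the bookkeeping of the incomplete last block $\{p_{\pi(x)},\dots,p_{\pi(x)+1}-1\}$, encoded by the parameter $r$, which is what makes the error term $O(d_{\pi(x)}^2)$.
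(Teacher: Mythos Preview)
Your proof is correct and follows essentially the same route as the paper: reduce $S_n$ via Proposition~2 to $\sum_{k}d_k^2$, then invoke \eqref{H} (or \eqref{S} under RH) for the upper bound and the Cauchy--Schwarz/AM--QM inequality for the lower bound. The only cosmetic difference is that the paper sandwiches $S_{\lfloor x\rfloor}$ between $S_{p_k}$ and $S_{p_{k+1}}$ and applies the simpler prime-case identity \eqref{sum_prime}, whereas you apply \eqref{sum_composite} directly at $n=\lfloor x\rfloor$ and track the boundary term $O(d_{\pi(x)}^2)$ explicitly.
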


\begin{proof} Let $x\ge 2$ and let $p_k\le x< p_{k+1}$. By using \eqref{sum_prime}
for $n=p_{k+1}$,
\begin{align*}
\sum_{n\le x} a_n & \le \sum_{i=1}^{p_{k+1}} a_i =
\frac1{2}\left(2p_{k+2}-p_{k+1} +\sum_{i=1}^k d_i^2
\right)\\
& \ll p_{k+2} + \sum_{p_i\le x} d_i^2.
\end{align*}

Taking into account the estimate \eqref{H} due to Heath-Brown, and
the fact that $p_{k+2}\sim p_k \le x$ we get the unconditional upper
bound in \eqref{asymp_sum}. If the Riemann hypothesis is true, then
by using Selberg's result \eqref{S} we obtain the upper bound
$x(\log x)^3$.

Now, for the lower bound we use the trivial estimate
\begin{equation*}
\sum_{p_n\le x} d_n^2 \gg x\log x,
\end{equation*}
which follows from the inequality between the arithmetic and
quadratic means. We deduce that
\begin{align*}
\sum_{n\le x} a_n & \ge \sum_{i=1}^{p_k} a_i =
\frac1{2}\left(2p_{k+1}-p_k +\sum_{i=1}^{k-1} d_i^2
\right)\\
& \gg  \sum_{p_i\le p_{k-1}} d_i^2 \gg p_{k-1} \log
p_{k-1} \sim x\log x,
\end{align*}
since $p_{k-1}\sim k\log k$ and $k=\pi(x)\sim x/\log x$, $\log k
\sim \log x$.
\end{proof}

To prove our next result we need the following

\begin{lemma} We have
\begin{equation} \label{asympt_lemma}
\sum_{2\le n\le x} \log d_n = x\log \log x + O(x).
\end{equation}
\end{lemma}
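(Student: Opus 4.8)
The plan is to obtain \eqref{asympt_lemma} almost immediately from Panaitopol's estimate \eqref{P}, which is exactly an arithmetic-mean bound on the quantities $\log d_n-\log\log n$, and then to evaluate the elementary sum $\sum_{2\le n\le x}\log\log n$.

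First I would take logarithms in \eqref{P}. For an integer $N\ge 2$ each factor $\frac{d_k}{\log k}$ is positive, so \eqref{P} supplies an absolute constant $C>0$ with $\bigl|\sum_{k=2}^{N}(\log d_k-\log\log k)\bigr|\le CN$; taking $N=\lfloor x\rfloor$ this gives
\begin{equation*}
\sum_{2\le n\le x}\log d_n=\sum_{2\le n\le x}\log\log n+O(x),
\end{equation*}
so the lemma reduces to proving $\sum_{2\le n\le x}\log\log n=x\log\log x+O(x)$.

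Second, I would establish the latter by comparing the sum with $\int_2^x\log\log t\,dt$. The function $t\mapsto\log\log t$ is increasing on $[2,\infty)$ (its derivative is $\frac{1}{t\log t}$), so the standard monotone sum-versus-integral comparison gives $\sum_{2\le n\le x}\log\log n=\int_2^x\log\log t\,dt+O(\log\log x)$. An integration by parts yields $\int_2^x\log\log t\,dt=x\log\log x-2\log\log 2-\int_2^x\frac{dt}{\log t}$, and since $0<\int_2^x\frac{dt}{\log t}\le x/\log 2$ the right-hand side equals $x\log\log x+O(x)$. Combining this with the first step proves \eqref{asympt_lemma}.

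There is no serious obstacle in this argument: all the depth is concentrated in \eqref{P}, which is quoted as a known result of Panaitopol, and everything else is routine real analysis. The only points needing a little care are bookkeeping ones — the passage from $x$ to $\lfloor x\rfloor$, the bounded (and negative) boundary term $2\log\log 2$, and the crude bound on the logarithmic-integral term $\int_2^x \frac{dt}{\log t}$ — each of which contributes only $O(x)$. I would also note that one cannot hope to improve the error term beyond $O(x)$ by this route, since \eqref{P} asserts only $\asymp 1$ and not convergence to a limit.
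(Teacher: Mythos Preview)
Your proof is correct and follows essentially the same route as the paper: take logarithms in Panaitopol's estimate \eqref{P} to obtain $\sum_{2\le n\le x}\log d_n=\sum_{2\le n\le x}\log\log n+O(x)$, then use the elementary asymptotic $\sum_{2\le n\le x}\log\log n=x\log\log x+O(x)$. The only difference is that the paper simply quotes the latter formula as well known, whereas you supply the short integral-comparison proof.
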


\begin{proof} The inequalities \eqref{P} can be written as
\begin{equation*}
c n < \sum_{i=2}^n \log d_i -\sum_{i=2}^n \log \log i < C n
\end{equation*}
for some positive absolute constants $c$ and $C$. Now
\eqref{asympt_lemma} emerges by applying the well known asymptotic
formula
\begin{equation*}
\sum_{2\le n\le x} \log \log n = x\log \log x + O(x).
\end{equation*}
\end{proof}

\begin{theorem} We have
\begin{equation}\label{asymp_harm}
\sum_{n\le x} \frac1{a_n} = \frac{x\log \log x}{\log x} +
O\left(\frac{x}{\log x}\right).
\end{equation}
\end{theorem}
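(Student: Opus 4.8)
The plan is to group the terms $1/a_n$ according to the interval $[p_k,p_{k+1})$ containing $n$. By \eqref{values}, as $n$ runs through $p_k,p_k+1,\dots,p_{k+1}-1$ the value $a_n=p_{k+1}-n$ runs through $d_k,d_k-1,\dots,1$, so each complete block contributes
\[
\sum_{n=p_k}^{p_{k+1}-1}\frac1{a_n}=\sum_{j=1}^{d_k}\frac1j=\log d_k+O(1),
\]
where I use the elementary estimate $\sum_{j=1}^{m}1/j=\log m+\gamma+O(1/m)$, valid uniformly for $m\ge1$ (and consistent with the case $d_k=1$; note in particular $d_1=1$, so $\log d_1=0$).

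Next I would fix $x$, put $m=\pi(x)$ so that $p_m\le x<p_{m+1}$, and split $\sum_{n\le x}1/a_n$ into the single term $1/a_1=1$, the complete blocks $k=1,\dots,m-1$, and the partial block $p_m\le n\le\lfloor x\rfloor$. The partial block contributes $\sum_{j=p_{m+1}-\lfloor x\rfloor}^{d_m}1/j\le\sum_{j=1}^{d_m}1/j\ll\log d_m\ll\log x$, which is negligible (using $d_m<p_{m+1}\ll x$). Hence
\[
\sum_{n\le x}\frac1{a_n}=\sum_{k=2}^{m-1}\log d_k+O(m),
\]
where I absorbed the $O(1)$ per block and the harmless edge terms into $O(m)$ and used $\log x\ll m$. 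Applying the Lemma \eqref{asympt_lemma} with index bound $m-1$ then gives $\sum_{2\le k\le m-1}\log d_k=m\log\log m+O(m)$, so $\sum_{n\le x}1/a_n=m\log\log m+O(m)$ with $m=\pi(x)$.

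Finally I would convert this estimate in the index $m$ into one in $x$. Using the prime number theorem in the quantitative form $\pi(x)=x/\log x+O\bigl(x/(\log x)^2\bigr)$, one gets $\log m=\log x-\log\log x+o(1)$, hence $\log\log m=\log\log x+O\bigl(\log\log x/\log x\bigr)$, and therefore, since $m\ll x/\log x$,
\[
m\log\log m=\pi(x)\log\log x+O\!\left(\frac{x\log\log x}{(\log x)^2}\right)=\frac{x\log\log x}{\log x}+O\!\left(\frac{x}{\log x}\right).
\]
Combined with $O(m)=O(x/\log x)$ this yields \eqref{asymp_harm}.

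The genuinely delicate point is this last passage. The block decomposition naturally produces a sum indexed by $k\le\pi(x)$, and to recover the stated error term $O(x/\log x)$ one must absorb both the secondary behaviour of $\pi(x)$ and the discrepancy between $\log\log\pi(x)$ and $\log\log x$; this is exactly where a form of the prime number theorem slightly stronger than $\pi(x)\sim x/\log x$ is needed. Everything else — the harmonic-sum evaluation, the control of the partial block, and the bookkeeping of the $\gamma$-terms — is routine.
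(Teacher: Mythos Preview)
Your proof is correct and follows essentially the same route as the paper: block decomposition over $[p_k,p_{k+1})$, the harmonic-sum estimate $\sum_{j\le d_k}1/j=\log d_k+O(1)$, an appeal to the Lemma \eqref{asympt_lemma}, and finally conversion from $m=\pi(x)$ to $x$. You are in fact more careful than the paper at the last step, correctly noting that the stated error term $O(x/\log x)$ requires $\pi(x)=x/\log x+O(x/(\log x)^2)$ rather than the bare asymptotic $\pi(x)\sim x/\log x$ that the paper invokes.
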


\begin{proof} For $x=p_m-1$ ($m\ge 2$) we have by \eqref{values},
\begin{equation*}
\sum_{n\le p_m-1} \frac1{a_n} = 1+ \sum_{i=2}^m
\left(1+\frac1{2}+\cdots+\frac1{p_i-p_{i-1}}\right).
\end{equation*}

For an arbitrary $x\ge 3$ let $p_k$ ($k\ge 2$) be the prime such
that $p_k\le x< p_{k+1}$. Using the familiar inequalities
\begin{equation*}
\log m < 1+\frac1{2}+\cdots+\frac1{m}\le 1+\log m \quad (m\ge 1)
\end{equation*}
we deduce
\begin{equation*}
\log (p_i-p_{i-1})< 1+\frac1{2}+\cdots+\frac1{p_i-p_{i-1}} \le
1+\log (p_i-p_{i-1}) \quad (i\ge 2)
\end{equation*}
and
\begin{equation*}
1+\sum_{i=2}^k \log (p_i-p_{i-1}) +\frac1{d_k} < \sum_{n\le p_k-1} \frac1{a_n}+ \frac1{a_{p_k}}
\end{equation*}
\begin{equation*}
\le \sum_{n\le x} \frac1{a_n} \le \sum_{n\le p_{k+1}-1} \frac1{a_n}
\le 1+ k + \sum_{i=2}^{k+1} \log (p_i-p_{i-1}).
\end{equation*}

By \eqref{asympt_lemma} we obtain
\begin{equation*}
\sum_{n\le x} \frac1{a_n} = k\log \log k + O(k),
\end{equation*}

Here $k=\pi(x)\sim x/\log x$, $\log k
\sim \log x$ and we deduce \eqref{asymp_harm}.
\end{proof}

\begin{theorem} One has
\begin{equation*}
x \ll \sum_{n\le x} \log a_n  \ll x\log x.
\end{equation*}
\end{theorem}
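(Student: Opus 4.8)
The plan is to prove the two inequalities $\sum_{n\le x}\log a_n\ll x\log x$ and $\sum_{n\le x}\log a_n\gg x$ separately; both are "soft" and require nothing beyond Bertrand's postulate and a Chebyshev-type count of primes.

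For the \emph{upper bound} I would first note that $a_n\le n$ for every $n\ge 1$: by Bertrand's postulate there is a prime in $(n,2n]$, and since $p_{\pi(n)+1}$ is the least prime exceeding $n$, this gives $p_{\pi(n)+1}\le 2n$, hence $a_n=p_{\pi(n)+1}-n\le n$ by \eqref{exp_form}. Therefore $\log a_n\le\log n\le\log x$ for all $n\le x$, and summing yields $\sum_{n\le x}\log a_n\le\sum_{n\le x}\log n\le x\log x$. (One can also argue through the product identities: if $p_K\le x<p_{K+1}$ then $\sum_{n\le x}\log a_n\le\log P_{p_{K+1}-1}=\sum_{k=1}^{K}\log(d_k!)$ by \eqref{prod_prime}, and then $\log(d_k!)\le d_k\log d_k\le d_k\log(2x)$ together with $\sum_{k\le K}d_k=p_{K+1}-2\ll x$ gives the same bound.)

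For the \emph{lower bound} the key remark is that $a_n\ge 2$ exactly when $n+1$ is composite, since $a_n=1$ iff $n+1$ is prime. The number of $n\le x$ with $n+1$ prime is at most $\pi(x+1)\ll x/\log x$, so the set $\{n\le x: a_n\ge 2\}$ has at least $\lfloor x\rfloor-O(x/\log x)\gg x$ elements once $x$ is large. Since $\log a_n\ge 0$ always and $\log a_n\ge\log 2$ on that set, we get $\sum_{n\le x}\log a_n\ge(\log 2)\,\#\{n\le x:a_n\ge 2\}\gg x$.

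I do not expect a genuine obstacle: the only external inputs are Bertrand's postulate (for $a_n\le n$) and the Chebyshev estimate $\pi(x)\ll x/\log x$. The real difficulty lies elsewhere — in pinning down the true order of growth, which one expects to be $\sum_{n\le x}\log a_n\asymp x\log\log x$. That would follow by analysing $\sum_{k}\log(d_k!)$ in the spirit of the Lemma, using $\log(d_k!)=d_k\log d_k+O(d_k)$ together with the convexity of $t\mapsto t\log t$ (which forces $\sum_k d_k\log d_k\gg x\log\log x$), but this sharper statement is not what is being claimed here.
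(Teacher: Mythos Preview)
Your proposal is correct. The upper bound is essentially the paper's argument in a slightly different dress: where you use Bertrand to get $a_n\le n$ and sum $\log n$, the paper passes through the product identity \eqref{prod_prime}, bounds $\log(d_i!)\le d_i\log d_i$, and then uses Chebyshev's $d_i<p_i$ to telescope $\sum d_i\log p_i<(\log p_k)p_{k+1}$. You even mention this route as your alternative, so there is no real difference here.

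The lower bound, however, is genuinely different and noticeably more economical. The paper goes through \eqref{prod_prime} again, writes $\sum_{i\le k-1}\log(d_i!)\ge\sum_i(d_i\log d_i-d_i+1)$, and then splits the sum $\sum d_i\log d_i$ according to whether $d_i=2$ or $d_i\ge 3$, controlling the twin-prime contribution via $\pi_2(k-1)<k$ to squeeze out the numerical inequality $\sum_{n\le x}\log a_n>0.09\,p_k-3$. Your argument bypasses all of this: you simply observe that $a_n=1$ iff $n+1$ is prime, so all but $O(x/\log x)$ of the terms satisfy $\log a_n\ge\log 2$, and the bound $\gg x$ is immediate from Chebyshev's $\pi(x)\ll x/\log x$. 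What you gain is brevity and a much lighter toolkit (no product formula, no case split on gap sizes); what the paper's route buys is an explicit connection to $\sum d_i\log d_i$, which --- as you note at the end --- is the quantity one would want to analyse to push the lower bound up to the conjectural $x\log\log x$.
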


\begin{proof} For an arbitrary $x\ge 3$ let $p_k$ ($k\ge 2$) be the prime such
that $p_k\le x< p_{k+1}$. Using the elementary inequalities
\begin{equation*}
m \log m - m + 1 \le \log m! \le m\log m \quad (m\ge 1)
\end{equation*}
we deduce by applying \eqref{prod_prime} that
\begin{equation*}
\sum_{n\le x} \log a_n \le \sum_{n\le p_{k+1}-1} \log a_n =
\sum_{i=1}^k \log d_i! \le \sum_{i=1}^k d_i \log d_i
\end{equation*}
\begin{equation*}
< \sum_{i=1}^k d_i \log p_i < (\log p_k) \sum_{i=1}^k d_i < (\log
p_k)p_{k+1},
\end{equation*}
where we also used that $d_i=p_{i+1}-p_i<p_i$ by Chebyshev's
theorem. Here
\begin{equation} \label{k_x}
p_k\sim k\log k, \quad k=\pi(x)\sim x/\log x, \quad \log k \sim \log
x,
\end{equation}
and we obtain the upper bound $x\log x$.

On the other hand,
\begin{equation*}
\sum_{n\le x} \log a_n > \sum_{n\le p_k-1} \log a_n  =
\sum_{i=1}^{k-1} \log d_i!
\end{equation*}
\begin{equation*}
>  \sum_{i=1}^{k-1} (d_i \log d_i-d_i+1 ) = \sum_{i=2}^{k-1} d_i\log d_i - p_k+
k+1.
\end{equation*}

Here
\begin{align*}
\sum_{i=2}^{k-1} d_i \log d_i & = \sum_{\substack{i=2\\ d_i\ge
3}}^{k-1} d_i\log d_i + 2\log 2 \sum_{\substack{i=2\\ d_i=2}}^{k-1}
1 \\
& \ge (\log 3) \sum_{\substack{i=2\\ d_i\ge 3}}^{k-1} d_i + (2\log
2)\pi_2(k-1)\\
& = (\log 3)\left( \sum_{i=2}^{k-1} d_i - \sum_{\substack{i=2\\
d_i=2}}^{k-1} d_i \right) + (2\log 2)\pi_2(k-1) \\
& = (\log 3)\left(p_k-p_2  - 2\pi_2(k-1) \right) + (2\log
2)\pi_2(k-1)\\
& = (\log 3)p_k -2\log(3/2)\pi_2(k-1)-3\log 3 \\
& > (\log 3)p_k -2\log(3/2)k- 3\log 3,
\end{align*}
where it is sufficient to use the obvious estimate $\pi_2(k-1)< k$.
Note that $\log 3\doteq 1.09$, $2\log(3/2)\doteq 0.81$, $3\log
3\doteq 3.29$.

We deduce that
\begin{equation*}
\sum_{n\le x} \log a_n > 0.09 p_k - 3.
\end{equation*}

Now, \eqref{k_x} gives the lower bound $x$.
\end{proof}

\vskip6mm

\noindent B.~Apostol \\ Pedagogic High School "Spiru Haret", Str. Timotei Cipariu 5, RO--620004 Foc\c{s}ani, Romania
\\ E-mail: \verb"apo_brad@yahoo.com"
\medskip

\noindent L.~Petrescu \\ Technical College "Henri Coand\u{a}", Str. Tineretului 2, RO--820235 Tilcea, Romania
\\ E-mail: \verb"petrescuandreea@yahoo.com"
\medskip

\noindent L. T\'oth \\
Department of Mathematics, University of P\'ecs \\ Ifj\'us\'ag \'utja 6,
H--7624 P\'ecs, Hungary \\ E-mail: \verb"ltoth@gamma.ttk.pte.hu"

\end{document}